\newtheorem{te}{Theorem}[section]
\newtheorem{definition}[te]{Definition}
\newtheorem{os}[te]{Remark}
\newtheorem{prop}[te]{Proposition}
\newcommand{\be}{\begin{equation}}
\newcommand{\ee}{\end{equation}}
\begin{document}

%\begin{frontmatter}
\vskip 0.50truecm
\font\title=cmbx14 scaled\magstep2
\font\bfs=cmbx12 scaled\magstep1
\font\little=cmr10
\begin{center}
{\title {\Large \bf {The tempered space-fractional
 \\ Cattaneo equation}}}
 \\ [0.25truecm]
{\Large Luisa  Beghin$^1$. Roberto Garra$^2$, \\  Francesco  Mainardi$^{3,(\star)}$, 
Gianni Pagnini$^{4.5}$}
\\[0.25truecm]
$\null^1${\little Department  of Statistical Sciences, Sapienza University of Rome,}
\\ {\little  P.le Aldo Moro 5, I-00185 Rome, Italy}
\\[0.25truecm]
$\null^2$ {\little Institute of Marine Sciences, National Research Council (CNR),}
\\{\little  Via del Fosso del Cavaliere, I-00133 Rome, Italy}
\\[0.25truecm]
$\null^3$ {\little Department of Physics and Astronomy, University of Bologna, and INFN}
\\  {\little Via Irnerio 46, I-40126 Bologna, Italy}
\\
$\null^{(\star)}$ {\little Corresponding Author.   E-mail: mainardi@bo.infn.it}
\\[0,25truecm]
$\null^4$ {\little BCAM (Basque Center for Applied Mathematics)}
\\{\little , Alameda de Mazarredo 14, E-48009 Bilbao, Basque Country, Spain}
\\[0,25truecm]
$\null^4$ {\little Ikerbasque (Basque Foundation for Science),}
\\{\little Plaza Euskadi 5, E-48009 Bilbao, Basque Country, Spain}

\end{center}
%%%%%%%%%%%%5

\begin{abstract}
We consider the time-fractional Cattaneo equation involving
the tempered Caputo space-fractional derivative. 
We find the characteristic function of the related process and 
we explain the main differences with previous stochastic treatments 
of the time-fractional Cattaneo equation.

\vspace{0.4cm}

\noindent {\bf Mathematics Subject Classification:} 
26A33, 33E12, 35C05, 44A10, 60H30										
%26  Real functions: 26A33 Fractional derivatives and integrals
%33 Special functions:33E12 Mittag-Leffler functions and generalizations
%35 Partial differential equations:35C05 Solutions to PDEs in closed form
%44 Integral transforms, operational calculus:44A10 Laplace transform
% 60 Probability theory and stochastic processes:
%60H30 Applications of stochastic analysis (to PDEs, etc.)

\vspace{0.3cm}

\noindent {\bf Keywords:} 
Cattaneo equation;
tempered fractional derivative;
stochastic processes

\end{abstract}

%\end{frontmatter}

%\linenumbers

 \section{Introduction}
The recent experimental observation 
of wavelike thermal transport in graphite at temperatures above 
$100 \, {\rm K}$ \cite{huberman_etal-s-2019}, 
which confirms the just derived theoretical prediction \cite{ding_etal-nl-2018},
solves the long-standing challenge to establish the existence, 
in certain materials, of phonon hydrodynamics and second sound phenomenon 
at relatively high temperatures, see e.g \cite{lindsay_etal-jap-2019},
\cite{lee_li-2020}. 
In fact,
the occurrence of second sound was previously limited 
to a handful of materials at low temperatures and 
therefore the scientific and practical significance of this phenomenon
was limited. This new experimental evidence indeed potentially indicate 
an important role of second sound in microscale transient heat transport 
in two-dimensional and layered materials in a wide temperature range. 

A wavelike thermal transport implies a phonon hydrodynamics regime 
that is intermediate between ballistic and diffusive regimes, 
and it is properly described by a generalization of Fourier's law 
into the viscous heat equation (or damped wave equation) 
\cite{simoncelli_etal-prx-2020}.
Thus, second sound in solids occurs when the
local temperature follows an hyperbolic equation analogue to 
the telegrapher's equation in electromagnetism and to Cattaneo's equation 
in conduction problems \cite{hardy-prb-1970} that reads
\begin{equation}
\frac{\partial^2 f}{\partial t^2} +
\frac{\partial f}{\partial t} =
\frac{\partial^2 f}{\partial x^2} \,.
\label{TE}
\end{equation}
In this respect, we remind that when the same 
hyperbolic equation governs the pressure or the density
then we have the first sound.
In general, for second sound phenomena the damping term
dominates the inertial term and we have the diffusion equation,
while for the first sound phenomena the opposite is true and we
have the wave equation.	

This experimental ascertainment at relative high temperature 
of the second sound in graphen, and in general in solids, 
motivated us to a mathematical investigation 
of non-local extensions of the viscous heat equation 
(\ref{TE}) in the spirit 
of taking into account the already widely established
evidences of non-local effects both in diffusive, 
see, e.g., \cite{klm}, and viscous, see, e.g. \cite{mainardi-2010}, systems.  
Since from the physical point of view, 
it is more appropriate to speak about telegrapher's equation 
when we speak of applications to electromagnetism, 
while it is common to use the name Cattaneo equation 
in problems regarding heat conduction and anomalous transport processes,
then hereinafter we refer to the Cattaneo equation,
which, we remind, still calls for a deep mathematical analysis 
both in the classical formulation 
\cite{spigler-mmas-2020,carillo_etal-mcs-2020} 
and in the non-local extension 
\cite{ferrillo_etal-siamjam-2018,angelani_etal-jpa-2020}.

The Cattaneo equation plays indeed a relevant role in many different physical 
contexts. In particular, in random motion and heat propagation models
with finite front velocity \cite{Preziosi} 
and, more recently, also in run-and-tumble bacterial dynamics \cite{luca}. 
The space- and time-fractional counterpart of this equation has gained 
a relevant interest both for physical applications 
and stochastic models related to continuous-time persistent random walks
\cite{gianni,mas}. 
Non-local generalization of the Cattaneo equation
through the fractional calculus has been already investigated,
see e.g., \cite{compte_etal-jpa-1997,metzler_etal-pa-1999},
\cite{Enzo}, \cite{mirko}, and more recently \\
\cite{gianni,mas}. 
Here, the novel contribution with respect to the literature,
lays on the analysis of the role of the tempering of fractional derivative 
when applied for generalizing the space-derivative. 
We remind that, indeed, the tempered fractional diffusion equation 
has been already investigated,
e.g., \cite{liemert_etal-fcaa-2017,lischke_etal-fcaa-2019}.

The rest of the paper is organized as follows. 
First, we provide in Section 2 the preliminaries notions
on the tempered derivatives and the related processes,
and later in Section 3 the main results are reported.

\section{Preliminaries on tempered fractional derivatives and 
related stochastic processes}
The \textit{shifted fractional derivative} 
has been used in the physical literature for mathematical models of 
wave propagation in porous media \cite{hany} 
and in probability in relation with 
the Tempered Stable Subordinator (TSS).
The shifted fractional derivative is defined as 
\begin{equation}
\left(\lambda + \frac{d}{dx}\right)^\alpha f(x) = 
e^{-\lambda x} D_x^\alpha \, [e^{\lambda x}f(x)] \,, 
\quad \alpha\in (0,1) \,, \quad \lambda \geq 0 \,, \quad  x \geq 0 \,,
\end{equation}
where $D_x^\alpha$ denotes the space fractional derivative 
in the sense of Caputo of order $\alpha \in (0,1)$,
i.e.,
\begin{equation}
D_x^\alpha f(x) = \frac{1}{\Gamma(1-\alpha)}\int_0^x (x-\xi)^{-\alpha}
\frac{\partial f}{\partial \xi} \, d\xi \,.
\end{equation}
Thus, the Laplace transform of the shifted fractional derivative 
is given by \cite{hany}
\begin{equation}\label{lt}
\mathfrak{L}	
\bigg\{\left(\lambda+\frac{d}{dx}\right)^\alpha f(x)\bigg\}(s) = 
(s+\lambda)^\alpha \widetilde{f}(s) - (s+\lambda)^{\alpha-1} f(0^+)
\,,
\end{equation}
where we denoted by $s$ the Laplace parameter and $\widetilde{f}(s)$ 
the Laplace transform of the function $f(x)$, i.e.,
$\displaystyle{\int_0^\infty e^{-sx}f(x) \, dx=\widetilde{f}(s)}$.

In probability the transition density $f_{\lambda, \alpha}(x,t)$ 
of the TSS can be introduced by "tempering" the transition density 
of the $\alpha$-stable subordinator $h_\alpha(x,t)$ as follows
\begin{equation}
\label{1.3}
f_{\lambda, \alpha}(x,t):= e^{-\lambda x+\lambda^\alpha t} h_\alpha(x,t)
\,.
\end{equation}
Indeed, it is possible to prove \cite{luisa} that the density of 
the TSS satisfies the following tempered-fractional equation
\begin{equation}
\frac{\partial}{\partial t}f_{\lambda, \alpha}(x,t) = 
\lambda^\alpha
f_{\lambda, \alpha}(x,t) - 
\left(\lambda + \frac{\partial}{\partial x}\right)^\alpha 
f_{\lambda, \alpha}(x,t) = 
- \frac{\partial^{\lambda,\alpha}}{\partial x^{\lambda,\alpha}}
f_{\lambda, \alpha}(x,t) \,,
\end{equation}
where
\begin{equation}
\frac{\partial^{\lambda,\alpha}}{\partial x^{\lambda,\alpha}}f(x,t):= 
\left(\lambda + \frac{\partial}{\partial x}\right)^\alpha f(x,t)
- \lambda^\alpha f(x,t) \,,
\end{equation}
under the conditions
\begin{equation}
f_{\lambda, \alpha}(x,0) = \delta(x) \,, \quad f_{\lambda, \alpha}(0,t)= 0 \,.
\end{equation}

We here briefly recall the notion of fractional tempered stable (TS) 
process \cite{luisa}.
We denote by $\mathcal{L}_\alpha(t):=\inf\{s:\mathcal{H}_\alpha(s)>t\}$, 
$t\geq 0$, $\alpha\in(0,1)$, 
the inverse of the $\alpha$-stable subordinator $\mathcal{H}_\alpha(t)$,
then

\begin{definition}
	Let $\mathcal{L}_\nu(t)$, with $t\geq 0$, 
be the inverse of the stable subordinator, 
then the fractional TS process is defined as
	\begin{equation}
		\mathcal{T}^\nu_{\lambda, \alpha}(t):=
\mathcal{T}_{\lambda, \alpha}(\mathcal{L}_\nu(t)) \,, 
\quad t\geq 0 \,, \quad \lambda \geq 0 \,, \quad \nu \,, \alpha \in (0,1) \,,
	\end{equation}
	where $\mathcal{L}_\nu$ is independent of 
the tempered stable subordinator (TSS) $\mathcal{T}_{\lambda, \nu}$.
\end{definition}

The density of the fractional TS process 
$\mathcal{T}^\nu_{\lambda, \alpha}(t)$ satisfies
the tempered fractional equation \cite[Theorem 6]{luisa} 
\begin{equation}\label{in0}
D_t^\nu f = 
\bigg[\lambda^\alpha-\left(\lambda+\frac{\partial}{\partial x}\right)^{\alpha}
\bigg]f \,, 
\quad \lambda\geq 0 \,,
\quad \nu \,, \alpha \in (0,1) \,, 
\end{equation}
under the initial-boundary conditions
\begin{equation}
	\begin{cases}
		f(x,0)= \delta(x) \,,\\
		f(0,t) = 0 \,.
	\end{cases}
\end{equation}
Finally, we recall also that the density 
of the time-changed Brownian motion
\begin{equation}\label{1.7}
\mathcal{X}^{\nu}_{\lambda,\alpha}(t) := 
B(\mathcal{T}^\nu_{\lambda, \alpha}(t)) \,,
\quad \lambda\geq 0 \,,
\quad \nu , \alpha \in (0,1) \,,
\end{equation}
coincides with the solution of the tempered equation \cite{luisa}
\begin{equation}
\label{in1}
D_t^\nu g = 
\bigg[\lambda^\alpha-\left(\lambda-\frac{\partial^2}{\partial x^2}
\right)^{\alpha}\bigg] \, g \,,
\quad
- \infty < x < + \infty \,.
\end{equation}

\section{The tempered space-fractional Cattaneo-type equation}
Let $\mathcal{L}^\beta(t)$, with $t>0$, 
be the inverse process of the sum of two independent positively 
skewed stable random variables $H_1^{2\beta}$ and $H_2^\beta$, that is
\begin{equation}
\mathcal{L}^\beta(t) 
:= \inf \bigg\{s\geq 0, \ H_1^{2\beta}(s)+(2k)^{1/\beta}H_2^\beta(s) \geq t
\bigg\}\,, \quad t \,, k >0 \,, \quad \beta \in (0,1/2) \,. 
\end{equation} 
We recall that the Laplace transform with respect to $t$
of the law $l_\beta(x,t)$ 
of the process $\mathcal{L}^\beta(t)$ is given by \cite{mirko}
\begin{equation}
	\widetilde{l}_\beta(x,s)= 
(s^{2\beta-1}+2k s^{\beta-1})e^{-xs^{2\beta}-2k xs^\beta} \,,
\end{equation}
and satisfies the fractional equation
\begin{equation}
D_t^{2\beta} u +2k D_t^\beta u = - \frac{\partial u}{\partial x}
\,.
\end{equation}
Then, we have the following result
\begin{te}
The solution of the tempered fractional equation 
\begin{equation}\label{tem2}
D_t^{2\beta} f+2k D_t^{\beta}f =
\bigg[
\lambda^\alpha-\left(\lambda-\frac{\partial^2}{\partial x^2}\right)^{\alpha}
\bigg] \, f \,, 
\quad -\infty < x < +\infty \,,
\end{equation} 
with $\beta \in (0,1/2)$ and $\alpha \in(0,1)$,
under the conditions $u(x,0) = \delta(x)$ and $u(0,t) = 0$,
coincides with the probability law of the process
\begin{equation}\label{tc}
W(t): =  B(\mathcal{T}_{\lambda,\alpha}(\mathcal{L}^\beta(t))) \,, 
\quad t>0 \,.
\end{equation}
Moreover, the fundamental solution of equation \eqref{tem2} 
has the following Fourier transform with respect to $x$
\begin{equation}
\widehat{u}(\xi,t) = \frac{1}{2}\bigg[
\bigg(1+\frac{k}{\sqrt{k^2-\theta(\xi)}}\bigg)E_{\beta,1}(r_1t^\beta)+
\bigg(1-\frac{k}{\sqrt{k^2-\theta(\xi)}}\bigg)
E_{\beta,1}(r_2t^\beta)\bigg] \,,
\end{equation}
with 
\begin{align}
\nonumber & \theta (\xi) = (\lambda+\|\xi\|^2)^\alpha-\lambda^\alpha \,,\\
		\nonumber & r_1 = -k+\sqrt{k^2-\theta (\xi)} \,,\\
		\nonumber & r_2 = -k-\sqrt{k^2-\theta (\xi)} \,,
	\end{align}
and where 
$$
E_{\beta, \gamma}(t) = \sum_{k=0}^\infty\frac{t^k}{\Gamma(\beta k+\gamma)} \,,
$$
is the  Mittag--Leffler function, for $\beta >0$ and $\gamma \in \mathbb{C}$.
\end{te}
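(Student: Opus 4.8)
\noindent\emph{Proof sketch.} The plan is to combine the subordination structure of $W(t)$ with the transform identities collected in Section~2. Let $q(x,y)$ denote the density, at the deterministic inner time $y\ge 0$, of the time-changed Brownian motion $B(\mathcal{T}_{\lambda,\alpha}(y))$. Since $\mathcal{L}^\beta$ is independent of $B\circ\mathcal{T}_{\lambda,\alpha}$, conditioning on $\mathcal{L}^\beta(t)$ gives, for the law $u$ of $W(t)$, the subordination formula
\[
u(x,t)=\int_0^\infty q(x,y)\,l_\beta(y,t)\,dy .
\]
Reading off the Laplace exponent of the tempered stable subordinator, $\E\,e^{-\|\xi\|^2\mathcal{T}_{\lambda,\alpha}(y)}=e^{-y\theta(\xi)}$ with $\theta(\xi)=(\lambda+\|\xi\|^2)^\alpha-\lambda^\alpha$, so that $\widehat q(\xi,y)=e^{-y\theta(\xi)}$; equivalently $q$ solves the first-order (in $y$) equation $\partial_y q=[\lambda^\alpha-(\lambda-\partial^2/\partial x^2)^\alpha]q$ with $q(x,0)=\delta(x)$, the un-subordinated analogue of \eqref{in1}. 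Hence its Laplace transform $\widetilde q(x,\mu)$ in $y$ satisfies $\mu\widetilde q-\delta(x)=[\lambda^\alpha-(\lambda-\partial^2/\partial x^2)^\alpha]\widetilde q$ and, in Fourier, $\widehat{\widetilde q}(\xi,\mu)=(\mu+\theta(\xi))^{-1}$.

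First I would take the Laplace transform in $t$ of the subordination formula. Using the expression for $\widetilde l_\beta(y,s)$ recalled above and Fubini's theorem,
\[
\widetilde u(x,s)=\big(s^{2\beta-1}+2k\,s^{\beta-1}\big)\int_0^\infty q(x,y)\,e^{-y(s^{2\beta}+2k\,s^\beta)}\,dy=\big(s^{2\beta-1}+2k\,s^{\beta-1}\big)\,\widetilde q\!\left(x,\,s^{2\beta}+2k\,s^\beta\right).
\]
Because $s^{2\beta-1}+2k\,s^{\beta-1}=s^{-1}\big(s^{2\beta}+2k\,s^\beta\big)$, substituting this into the equation for $\widetilde q$ at the point $\mu=s^{2\beta}+2k\,s^\beta$ and rearranging yields
\[
\big(s^{2\beta}+2k\,s^\beta\big)\widetilde u(x,s)-\big(s^{2\beta-1}+2k\,s^{\beta-1}\big)\delta(x)=\Big[\lambda^\alpha-\big(\lambda-\tfrac{\partial^2}{\partial x^2}\big)^{\alpha}\Big]\widetilde u(x,s),
\]
which is precisely the Laplace transform of \eqref{tem2} under the initial datum $u(x,0)=\delta(x)$, so that, inverting, $u$ is the fundamental solution of \eqref{tem2}. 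Here it is crucial that $\beta\in(0,1/2)$, hence $2\beta<1$: each Caputo derivative then contributes a single boundary term, $s^{2\beta-1}u(x,0)$ and $s^{\beta-1}u(x,0)$ respectively, and no datum on $\partial_t u(x,0)$ is required.

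For the closed form I would pass to Fourier variables as well. Combining $\widehat{\widetilde q}(\xi,\mu)=(\mu+\theta(\xi))^{-1}$ with the last display gives
\[
\widehat{\widetilde u}(\xi,s)=\frac{s^{2\beta-1}+2k\,s^{\beta-1}}{s^{2\beta}+2k\,s^\beta+\theta(\xi)}=\frac{s^{\beta-1}\big(s^\beta+2k\big)}{(s^\beta-r_1)(s^\beta-r_2)},
\]
where $r_{1,2}=-k\pm\sqrt{k^2-\theta(\xi)}$ are the roots, in the variable $s^\beta$, of the denominator. A partial-fraction decomposition in $s^\beta$, using $r_1-r_2=2\sqrt{k^2-\theta(\xi)}$ and $r_i+2k=k\pm\sqrt{k^2-\theta(\xi)}$, produces
\[
\widehat{\widetilde u}(\xi,s)=\frac12\bigg(1+\frac{k}{\sqrt{k^2-\theta(\xi)}}\bigg)\frac{s^{\beta-1}}{s^\beta-r_1}+\frac12\bigg(1-\frac{k}{\sqrt{k^2-\theta(\xi)}}\bigg)\frac{s^{\beta-1}}{s^\beta-r_2},
\]
and inverting each term with the classical identity $\mathfrak{L}\{E_{\beta,1}(r\,t^\beta)\}(s)=s^{\beta-1}/(s^\beta-r)$ gives the announced formula for $\widehat u(\xi,t)$. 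I would close by noting that the expression is real-valued also when $\theta(\xi)>k^2$: in that regime $r_1,r_2$ and the two prefactors are complex conjugates of one another, so the two Mittag--Leffler summands are conjugate as well, and the inversion persists by analytic continuation in $\xi$.

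The computations are essentially mechanical once the subordination identity and the transforms of $l_\beta$ and of $\mathcal{T}_{\lambda,\alpha}$ are in hand; the points that need care — and hence the only real obstacles — are the interchange of the $y$- and $t$-integrals, the observation that the restriction $\beta<1/2$ is exactly what makes the two Caputo terms consistent with the single initial condition $u(x,0)=\delta(x)$, and the justification of the Mittag--Leffler inversion (and thus of the final expression) when the roots $r_1,r_2$ are complex.
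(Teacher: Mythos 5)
Your proposal is correct and follows essentially the same route as the paper: the subordination representation $u(x,t)=\int_0^\infty q(x,y)\,l_\beta(y,t)\,dy$, passage to the Fourier--Laplace transform $\frac{s^{2\beta-1}+2ks^{\beta-1}}{s^{2\beta}+2ks^\beta+\theta(\xi)}$, identification with the transformed equation \eqref{tem2}, and Mittag--Leffler inversion. The only difference is one of detail: you spell out the partial-fraction decomposition in $s^\beta$ and the role of $\beta<1/2$ in the Caputo boundary terms, steps the paper merely asserts or delegates to the reference \cite{mirko}.
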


\begin{proof}
	The probability law of the process $W(t)$ is given by
	\begin{equation}
		w(x,t)  = \int_0^\infty v(x,\mu)l_\beta(\mu, t) \, d\mu \,,
	\end{equation}
where $v(x,t)$ is the density of the tempered stable subordinator whose 
Fourier-transform is given by 
\begin{equation}
\widehat{v}(\xi,t) = e^{-t \theta(\xi)} \,.
\end{equation}
Therefore, the Fourier transform of $w(x,t)$ is given by
\begin{equation}
	\widehat{w}(\xi, t) = 
\int_0^\infty e^{-\mu \theta(\xi)}l_\beta(\mu, t) \, d\mu \,.
\end{equation}
Since the Laplace transform with respect to $t$
of $l_\beta(x,t)$ is given by 
\begin{equation}
	\widetilde{l}_\beta(x,s) = 
(s^{2\beta-1}+2k s^{\beta-1})e^{-xs^{2\beta}-2k xs^\beta} \,,
\end{equation}
then the Fourier--Laplace transform of the probability law 
of the process $W(t)$ is given  by
\begin{align}
\nonumber \widehat{\widetilde{w}}(\xi,s) &= 
\int_0^\infty e^{-st} \, dt 
\int_0^\infty e^{-\mu \theta(\xi)}l_\beta(\mu, t) \, d\mu\\
& = (s^{2\beta-1}+2k s^{\beta-1})\int_0^\infty e^{-\mu \theta(\xi)-\mu s^{2\beta}-2k \mu s^\beta} d\mu = \frac{s^{2\beta-1}+2k s^{\beta-1}}{s^{2\beta}+2k s^\beta+\theta(\xi)}
\,.
\label{ltf} 
\end{align}
If we compare it with the Fourier-Laplace transform of the 
fundamental solution of the equation \eqref{tem2}, 
we observe that they are iqual and the claimed result holds. 

Regarding the inverse time-Laplace transform of \eqref{ltf},
we report that it can be obtained through an algebraic manipulations 
\cite[pp. 1021--1022]{mirko} and by recalling the Laplace transform 
formulas for the Mittag--Leffler functions. 
\end{proof}

We observe that the process $X(t):=B(\mathcal{T}_{\lambda,\alpha}(t))$ 
is indeed a L\'evy process with L\'evy exponent
\begin{align}
\nonumber 	\psi(\xi) &= -\frac{1}{t}\ln \mathbb{E}[e^{i\xi X(t)}]\\
\nonumber  & = -\frac{1}{t}\ln \mathbb{E}[\mathbb{E}[e^{i\xi X(\mathcal{T}(t))}|\mathcal{T}(t)]] = -\ln\mathbb{E} e^{-\xi^2\mathcal{T}(t)/2}\\
\nonumber  & = - \frac{1}{t} \ln 	\bigg[\int_0^\infty e^{-\frac{\xi^2}{2}x} f_{\lambda, \alpha}(x,t)dx\bigg]
	\,,
\end{align}
and by using equation \eqref{1.3} we have that 
\begin{align}
	\nonumber 	
\psi(\xi) &=  - -\frac{1}{t}\ln \bigg[\int_0^\infty e^{\lambda^\alpha t-\lambda x-\frac{\xi^2}{2}x} h_{\alpha}(x,t) \, dx\bigg]\\
	&=  -\frac{1}{t}\ln[e^{\lambda^\alpha t-(\frac{\xi^2}{2}+\lambda)^\alpha t}] = 
	\bigg[(\frac{\xi^2}{2}+\lambda)^\alpha-\lambda^\alpha\bigg]
\,.
\end{align}
Since the Laplace exponent of the sum of stable subordinators 
$H_1^{2\beta}(t)+(2k)^{1/\beta}H_2^\beta(t)$ is given by
\begin{equation}
	\phi(s) = (s^{2\beta}+2k s^\beta) \,,
\end{equation}
then we have that the inverse process $\mathcal{L}^\beta(t)$ 
has L\'evy exponent given by 
\begin{equation}
\mathfrak{L}\bigg\{\mathbb{E}\mathcal{L}^\beta(t);s\bigg\} = \frac{1}{s\phi(s)} = \frac{1}{s^{2\beta+1}+2k s^{\beta+1}}
\,,
\end{equation}
whose inverse Laplace transform, namely $U(t)$, is given by
\begin{equation}
	U(t):= t^{2\beta}E_{\beta,2\beta+1}(-2kt^\beta)
\,.
\end{equation}

We recall now \cite[Theorem 2.1]{Leo} 
that if we have a time-changed L\'evy process $X(Y(t))$, 
with X(t) an homogeneous L\'evy process and 
Y(t) a non-decreasing process independent of X, 
then we have
\begin{equation}
	\mathbb{E}X(Y(t)) = U(t) \mathbb{E}X(1) \,,
\end{equation}
and 
\begin{equation}
	Var X(Y(t)) = \mathbb{E}[X(1)]^2 Var[Y(t)]+U(t)Var[X(1)]
\,,
\end{equation}
where $U(t) = \mathbb{E}Y(t)$.
In our case, since the mean value of $X(t)$ is null for all $t>0$, 
we have that 
\begin{equation}
\mathbb{E}X(\mathcal{L}^\beta(t)) = 0 \,, \quad {\rm for} \,
{\rm all} \quad t >0 \,,
\end{equation}
and 
\begin{equation}
	Var X(\mathcal{L}^\beta(t)) =  
t^{2\beta}E_{\beta,2\beta+1}(-2kt^\beta)Var[X(1)]
\,.
\end{equation}
We observe that
\begin{align}
	\nonumber Var[X(1)] &=\mathbb{E}\bigg[\mathbb{E}[B(\mathcal{T}_{\lambda,\alpha}(1))^2|\mathcal{T}_{\lambda,\alpha}(1)]\bigg]= \mathbb{E}\bigg[\mathcal{T}(1)^2]\bigg]\\
	\nonumber& = \int_0^{+\infty} z^2 e^{-\lambda z+\lambda^\alpha}h_\alpha(z,1) \, dz= e^{\lambda^\alpha}\frac{d^2}{d\lambda^2}\int_0^{+\infty}e^{-\lambda z} h_\alpha(z,1) \, dz\\
	& = e^{\lambda^\alpha}\frac{d^2}{d\lambda^2}e^{-\lambda^\alpha}= \alpha \lambda^{\alpha-2}[1-\alpha+\alpha\lambda^\alpha]
\end{align}
and we conclude that 
\begin{equation}
	Var X(\mathcal{L}^\beta(t)) =  \alpha \lambda^{\alpha-2}[1-\alpha+\alpha\lambda^\alpha]t^{2\beta}E_{\beta,2\beta+1}(-2kt^\beta) \,.
\end{equation}

We recover, for $\lambda = 0$, 
the result for the space-time fractional (non-tempered) 
Cattaneo process
\cite[Theorem 4.1]{mirko} 
\begin{equation}
	Var X(\mathcal{L}^\beta(t)) =  
\alpha [1-\alpha]t^{2\beta}E_{\beta,2\beta+1}(-2kt^\beta)
\,.
\end{equation}

\begin{os}
We observe that the probabilistic interpretation given by the time-changed process \eqref{tc} works only for $\beta\in (0,1/2)$, that is a sort of multi-term time-fractional diffusion equation with space-tempered derivatives. On the other hand, the analytical representation of the solution is correct also for $1/2\!< \beta\!<\!1$, under the additive constraint 
$\partial_t u\big|_{t = 0}=0$. Moreover, we recover the Fourier transform of the fundamental solution that was 
originally found by Beghin and Orsingher \cite{Enzo}.
\end{os}

We can also consider the space-Laplace transform of the solution for the more general case $\alpha \in (0,1)$, even if in this case we loose the probabilistic representation that is valid only in the case $\alpha\in (0,1/2)$.

For the particular case $\beta = 1$, we have the following 
\begin{prop}
	The space-Laplace transform of the solution for the fractional problem \eqref{tem2}, under the conditions
	$u(x,0) = \delta(x)$ and $\partial_t u(x,t)\bigg|_{t=0}=0$ is given by
 \begin{align}\label{sol0}
	\tilde{u}(s,t) = \frac{e^{-kt}}{2}&\bigg[\left(1+\frac{k}{\sqrt{k^2-\psi(s)}}\right)e^{t\sqrt{k^2-\psi(s)}}\\
	\nonumber	&+\left(1-\frac{k}{\sqrt{k^2-\psi(s)}}\right) e^{-t\sqrt{k^2-\psi(s)}}\bigg] \,,
\label{cf}
\end{align}
where 
\begin{equation}
	\psi(s) = (s+\lambda)^\alpha-\lambda^\alpha \,.
\end{equation}
\end{prop}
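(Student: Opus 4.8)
The plan is to remove the spatial variable from \eqref{tem2} by a transform, solve the resulting constant–coefficient ordinary differential equation in $t$, and then re‑express the answer through $\psi$. First I would put $\beta=1$, so that $D_t^{2\beta}=\partial_t^2$ and $D_t^{\beta}=\partial_t$ and \eqref{tem2} becomes the space–nonlocal damped wave equation $\partial_t^2 u+2k\,\partial_t u=[\lambda^\alpha-(\lambda-\partial_x^2)^\alpha]u$ on $\mathbb{R}$. I would then Fourier‑transform in $x$. The tempered spatial operator is, by construction (compare \eqref{in1} and the computation above of the L\'evy exponent of $B(\mathcal T_{\lambda,\alpha}(\cdot))$), the Fourier multiplier with symbol $(\lambda+\|\xi\|^2)^\alpha$, so that $\mathfrak{F}\{[\lambda^\alpha-(\lambda-\partial_x^2)^\alpha]u\}(\xi)=-\theta(\xi)\,\widehat u(\xi,t)$ with $\theta(\xi)=(\lambda+\|\xi\|^2)^\alpha-\lambda^\alpha$; together with $\widehat\delta\equiv1$ and the datum $\partial_t u|_{t=0}=0$ the problem collapses to
\[
\partial_t^2\widehat u+2k\,\partial_t\widehat u+\theta(\xi)\,\widehat u=0 ,\qquad \widehat u(\xi,0)=1 ,\quad \partial_t\widehat u(\xi,0)=0 .
\]
The two initial conditions are precisely what a second–order equation in $t$ requires, which is exactly why the extra constraint $\partial_t u|_{t=0}=0$ must be imposed once $\beta=1$.

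Solving this ODE is routine: the characteristic roots are $r_{1,2}=-k\pm\sqrt{k^2-\theta(\xi)}$, so $\widehat u=c_1e^{r_1t}+c_2e^{r_2t}$, and the conditions $c_1+c_2=1$, $c_1r_1+c_2r_2=0$ together with $r_1-r_2=2\sqrt{k^2-\theta(\xi)}$ and $r_1+r_2=-2k$ give $c_{1,2}=\tfrac12\bigl(1\pm k/\sqrt{k^2-\theta(\xi)}\bigr)$; factoring $e^{-kt}$ out of $e^{r_{1,2}t}$ yields
\[
\widehat u(\xi,t)=\frac{e^{-kt}}{2}\Bigl[\bigl(1+\tfrac{k}{\sqrt{k^2-\theta(\xi)}}\bigr)e^{t\sqrt{k^2-\theta(\xi)}}+\bigl(1-\tfrac{k}{\sqrt{k^2-\theta(\xi)}}\bigr)e^{-t\sqrt{k^2-\theta(\xi)}}\Bigr] .
\]
Since $\theta(\xi)$ depends on $\xi$ only through $\|\xi\|^2$ and $\theta(\xi)=\psi(\|\xi\|^2)$ with $\psi(s)=(s+\lambda)^\alpha-\lambda^\alpha$, the space–Laplace transform — read, as is customary for space–fractional equations, as the Laplace transform of the subordinating law $\sigma(\cdot,t)$, i.e. $\widehat u(\xi,t)=\int_0^\infty e^{-y\|\xi\|^2}\sigma(y,t)\,dy$ and $\widetilde u(s,t)=\widehat u(\xi,t)\big|_{\|\xi\|^2\mapsto s}$ — is exactly \eqref{sol0}.

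The genuinely delicate point, and the main obstacle, is not the algebra but the reading of the spatial symbol: $(\lambda-\partial_x^2)^\alpha$ must not be handled by a naive Laplace transform in $x$ (which would return $s^2$ together with boundary contributions), and the correct interpretation is that $\widehat u$ is a function of $\|\xi\|^2$ alone, so that the substitution $\|\xi\|^2\mapsto s$ — legitimate thanks to the subordination structure — produces the transform with exponent $\psi(s)$ rather than $(\lambda-s^2)^\alpha-\lambda^\alpha$. For $\alpha\in(0,1)$ the statement is therefore purely analytic, the probabilistic representation \eqref{tc} with $\beta=1$ being only formal since $\mathcal L^1$ is no longer a bona fide inverse subordinator. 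A convenient check: since $E_{1,1}(z)=e^z$, formula \eqref{sol0} is exactly the $\beta=1$ specialisation of the Fourier transform appearing in the Theorem, with $\theta(\xi)$ replaced by $\psi(s)$.
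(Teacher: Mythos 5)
Your reduction to the damped-oscillator ODE and its explicit solution (roots $-k\pm\sqrt{k^2-\theta(\xi)}$, coefficients $\tfrac12\bigl(1\pm k/\sqrt{k^2-\theta(\xi)}\bigr)$) is exactly the algebra the result rests on, and your sign convention is the one consistent with \eqref{sol0}: note, incidentally, that the paper's displayed ODE $\widetilde u_{tt}+2k\widetilde u_t=\psi(s)\widetilde u$ carries a sign slip, since \eqref{sol0} requires $\widetilde u_{tt}+2k\widetilde u_t=-\psi(s)\widetilde u$. However, the paper's proof proceeds differently at the transform step, and that is precisely where your argument has a gap. In the paper, ``space-Laplace transform'' means the honest transform $\widetilde u(s,t)=\int_0^\infty e^{-sx}u(x,t)\,dx$: the spatial operator is read as the tempered (shifted) Caputo derivative in $x$, so formula \eqref{lt} applies directly, the boundary term $(s+\lambda)^{\alpha-1}u(0^+,t)$ is annihilated by the condition $u(0,t)=0$ (the same condition as in the Theorem and in the Dirichlet problem \eqref{dr}), and one lands in one line on an ODE in $t$ with coefficient $\psi(s)=(s+\lambda)^\alpha-\lambda^\alpha$; no Fourier analysis and no subordination argument are needed.

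Your alternative --- computing the Fourier symbol and then declaring $\widetilde u(s,t):=\widehat u(\xi,t)\big|_{\|\xi\|^2\mapsto s}$ --- does not prove that the space-Laplace transform of the solution equals \eqref{sol0}; it is a relabelling of the Fourier formula. The justification you offer, namely $\widehat u(\xi,t)=\int_0^\infty e^{-y\|\xi\|^2}\sigma(y,t)\,dy$ for a subordinating law $\sigma$, fails exactly at $\beta=1$: the resulting characteristic function $e^{-kt}\bigl[\cosh\bigl(t\sqrt{k^2-\theta}\bigr)+k\,\sinh\bigl(t\sqrt{k^2-\theta}\bigr)/\sqrt{k^2-\theta}\bigr]$ oscillates and changes sign when $\theta(\xi)>k^2$, hence it is not completely monotone in $\|\xi\|^2$ and no nonnegative mixing law $\sigma(\cdot,t)$ exists; you concede that $\mathcal{L}^1$ is not a bona fide inverse subordinator, but then the substitution loses the very structure invoked to legitimate it. To close the gap, either adopt the paper's reading (one-sided tempered Caputo derivative in $x$, Laplace transform via \eqref{lt} with $u(0,t)=0$), in which case the proposition follows immediately, or state explicitly that for the whole-line operator $(\lambda-\partial_x^2)^\alpha$ the formula \eqref{sol0} is obtained by the formal replacement $\|\xi\|^2\mapsto s$ in the Fourier transform, which is a definition rather than a computed Laplace transform of $u$.
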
 

\begin{proof}
We take the space-Laplace transform and, by using \eqref{lt}, 
we have that
    \begin{equation}
    \frac{\partial^2 \widetilde{u}}{\partial t^2}+
2 k\frac{\partial \widetilde{u}}{\partial t} = 
(s+\lambda)^\alpha \widetilde{u}-\lambda^\alpha \widetilde{u} = 
\psi(s) \widetilde{u}
\,,
\end{equation}
whose solution, under the given conditions, is given by \eqref{sol0}.
\end{proof}

To conclude, we consider the following Dirichlet problem 
\begin{equation}\label{dr}
	\begin{cases}
	& \displaystyle \frac{\partial^2 u}{\partial t^2}+ 
2 k\frac{\partial u}{\partial t} = 
\frac{\partial^{\lambda, \alpha}u}{\partial x^{\lambda, \alpha}} \,, 
\quad x \geq 0 \,,\\
& u(x,0) = 0 \,, \quad u(0,t) = \phi(t) \,,\\
& \displaystyle \frac{\partial u}{\partial t}\bigg|_{t=0} = 0 \,,
	\end{cases}
\end{equation}
We have the following 
\begin{prop}
The time-Laplace transform of the solution for the Dirichlet problem 
\eqref{dr} is given by
\begin{equation}
	\widetilde{u}(x,s) = \widetilde{\phi}(s) e^{-\lambda x} 
        E_{\alpha,1}\left[-
	(s^2+2k s+\lambda^\alpha) x^\alpha\right] \,.
\label{dirichletsol}
\end{equation}
\end{prop}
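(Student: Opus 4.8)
The plan is to remove the time variable by a Laplace transform and then to solve a fractional differential equation in the space variable.

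First I would apply the Laplace transform $\mathfrak{L}$ with respect to $t$ to the equation in \eqref{dr}. Writing $\widetilde u(x,s)=\int_0^\infty e^{-st}u(x,t)\,dt$ and using the homogeneous initial data $u(x,0)=0$ and $\partial_t u(x,t)\big|_{t=0}=0$, the rules $\mathfrak{L}\{\partial_t^2 u\}(s)=s^2\widetilde u(x,s)$ and $\mathfrak{L}\{\partial_t u\}(s)=s\widetilde u(x,s)$ turn the left-hand side into $(s^2+2ks)\widetilde u$, while the right-hand side, being a purely spatial operator that commutes with the $t$-transform, becomes $\big[(\lambda+\tfrac{d}{dx})^{\alpha}-\lambda^{\alpha}\big]\widetilde u$. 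Hence $\widetilde u(\cdot,s)$ solves the shifted-fractional ordinary differential equation
\[
\Big(\lambda+\frac{d}{dx}\Big)^{\alpha}\widetilde u(x,s)=\big(s^{2}+2ks+\lambda^{\alpha}\big)\,\widetilde u(x,s),
\]
together with the transformed boundary value $\widetilde u(0,s)=\widetilde\phi(s)$. Note that the Laplace-transform formula \eqref{lt} is not invoked at this stage, since $x$ is kept as a differential variable.

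Next I would remove the shift through the exponential conjugation built into the definition of the shifted derivative. Setting $g(x,s):=e^{\lambda x}\widetilde u(x,s)$ and using $(\lambda+\tfrac{d}{dx})^{\alpha}f=e^{-\lambda x}D_x^{\alpha}\big[e^{\lambda x}f\big]$, the displayed equation turns into the ordinary Caputo fractional relaxation equation
\[
D_x^{\alpha}g(x,s)=\big(s^{2}+2ks+\lambda^{\alpha}\big)\,g(x,s),\qquad g(0,s)=\widetilde\phi(s).
\]
Since $\alpha\in(0,1)$, the single datum at $x=0$ already determines the solution, and the Cauchy problem $D_x^{\alpha}g=c\,g$, $g(0)=g_0$, has the unique solution $g(x)=g_0\,E_{\alpha,1}(c\,x^{\alpha})$: this rests on the eigenfunction identity $D_x^{\alpha}E_{\alpha,1}(c\,x^{\alpha})=c\,E_{\alpha,1}(c\,x^{\alpha})$, obtained by termwise Caputo differentiation of the Mittag--Leffler series (the constant term being annihilated), combined with uniqueness for the scalar equation; equivalently one may take a second Laplace transform in $x$, use \eqref{lt}, solve algebraically, and invert via $\mathfrak{L}\{E_{\alpha,1}(c\,x^{\alpha})\}(p)=p^{\alpha-1}/(p^{\alpha}-c)$, the shift $p\mapsto p+\lambda$ automatically reinstating the factor $e^{-\lambda x}$. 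Reverting to $\widetilde u(x,s)=e^{-\lambda x}g(x,s)$ and identifying $c$ with the coefficient $s^{2}+2ks+\lambda^{\alpha}$ then yields the announced formula \eqref{dirichletsol}.

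I do not expect a genuine obstacle here: this is the standard pattern of Laplace-transforming in $t$, reducing to a fractional ODE, and recognising the Mittag--Leffler solution. The points that need a little care are (i) checking that the polynomial-in-$s$ boundary terms coming from $\mathfrak{L}\{\partial_t^2 u\}$ and $\mathfrak{L}\{\partial_t u\}$ really drop out — which is exactly where the two homogeneous conditions in \eqref{dr} are used — together with an a posteriori consistency check that the proposed $\widetilde u$ returns $u(x,0)=0$ through its large-$s$ behaviour; and (ii) recognising the reduced $x$-equation as a Caputo relaxation equation rather than reaching for \eqref{lt}. The fact that $s^{2}+2ks+\lambda^{\alpha}$ is complex for complex $s$ is harmless, since $E_{\alpha,1}$ is an entire function.
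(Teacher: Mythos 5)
Your argument is essentially the paper's own proof: Laplace transform in $t$ with the two homogeneous conditions killing the boundary terms, reduction to the shifted-fractional eigenvalue problem $e^{-\lambda x}D_x^\alpha\,[e^{\lambda x}\widetilde u]=(s^2+2ks+\lambda^\alpha)\,\widetilde u$ with $\widetilde u(0,s)=\widetilde\phi(s)$, and then the Mittag--Leffler eigenfunction property; the explicit conjugation $g=e^{\lambda x}\widetilde u$ and the optional second Laplace transform in $x$ are only elaborations of that same step. The one point you gloss over is the sign in the final identification: the equation you (and the paper) derive is $D_x^\alpha g=+(s^2+2ks+\lambda^\alpha)\,g$, whose solution is $\widetilde\phi(s)\,E_{\alpha,1}\!\left[+(s^2+2ks+\lambda^\alpha)\,x^\alpha\right]$, whereas \eqref{dirichletsol} carries a minus sign inside the Mittag--Leffler function, so your computation does not literally ``yield the announced formula.'' This discrepancy is already present in the paper itself (its proof writes the same eigenvalue equation and then states \eqref{dirichletsol}; the later inversion for $k=\lambda^{\alpha/2}$ clearly relies on the decaying, minus-sign branch), so it reflects a sign inconsistency between \eqref{dr} and \eqref{dirichletsol} in the source rather than a defect of your strategy --- but a careful write-up should flag and resolve it (e.g.\ by fixing the sign convention of $\partial^{\lambda,\alpha}/\partial x^{\lambda,\alpha}$ in \eqref{dr}) instead of asserting that the identification is immediate.
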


\begin{proof}
	By taking the time-Laplace transform of \eqref{dr}, we have that
\begin{equation}
	s^2\tilde{u}+ 	2 ks \tilde{u} = 
e^{-\lambda x} D_x^\alpha [e^{\lambda x}\tilde{u}]-\lambda^\alpha \tilde{u} \,.
	\end{equation}
Therefore, we have that
\begin{equation}
	e^{-\lambda x} D_x^\alpha [e^{\lambda x}\tilde{u}] = \left(s^2+2ks+\lambda^\alpha\right)\tilde{u} \,,
\end{equation}
whose solution,
according to the boundary condition and by recalling that the one-parameter 
Mittag--Leffler function is an eigenfunction of the 
Caputo fractional derivative $D_x^\alpha$,
is given by (\ref{dirichletsol}).
	\end{proof}

We consider now the special case when $k=\lambda ^{\alpha /2}$,
then (\ref{dirichletsol}) can be rewritten as
\begin{equation}
\widetilde{u}(x,s)=\widetilde{\phi }(s)e^{-\lambda x}E_{\alpha
,1}\left[-(s+\lambda ^{\alpha /2})^{2}x^{\alpha }\right] \,,
\end{equation}
and the solution $u(x,t)$ can be explicitly derived.

We start by considering that
\begin{equation}
\int_{0}^{+\infty }e^{-\eta x}E_{\alpha ,1}(-\theta ^{2}x^{\alpha })dx=\frac{%
\eta ^{\alpha -1}}{\eta ^{\alpha }+\theta ^{2}} \,,  
\end{equation}
and we observe that its inverse Laplace transform 
with respect to $\theta $ is given by
\begin{equation}
\mathcal{L}^{-1}\left\{ \frac{\eta ^{\alpha -1}}{\eta ^{\alpha }+\theta ^{2}}%
;t\right\} =\eta ^{\alpha -1}tE_{2,2}(-\eta ^{\alpha }t^{2})
\,.
\end{equation}
Now we invert the Laplace transform with respect to $\eta $, 
by considering the following representation of 
the Mittag--Leffler function as H function
\cite[formula (1.136)]{mathai_etal-2010}: 
\begin{equation}
E_{\alpha ,\beta }(x)=H_{1,2}^{1,1}\left[ \left. -x\right\vert
\begin{array}{cc}
(0,1) & \; \\
(0,1) & (1-\beta ,\alpha)
\end{array}
\right] \,.
\end{equation}
We then apply the inverse transformation
\cite[formula (2.21)]{mathai_etal-2010}
(after checking that the conditions
are satisfied for $\sigma =\alpha $ and $\rho =1-\alpha $), as follows%
\begin{eqnarray*}
%\mathcal{L}^{-1}\left\{ \eta ^{\alpha -1}tH_{1,2}^{1,1}\left[ \left. \eta
%^{\alpha }t^{2}\right\vert
%\begin{array}{cc}
%(0,1) & \; \\
%(0,1) & (-1,2)%
%\end{array}%
%\right] ;x\right\}  
\mathcal{L}^{-1}\left\{ \eta ^{\alpha -1}t \,
E_{2,2} (- \eta^\alpha t^2) ; x\right\}  
&=&tx^{-\alpha }H_{2,2}^{1,1}\left[ \left. \frac{t^{2}}{x^{\alpha }}%
\right\vert
\begin{array}{cc}
(0,1) & (1-\alpha ,\alpha ) \\
(0,1) & (-1,2)%
\end{array}%
\right]  \\
&=&\cite[{\rm formula} \, (1.60)]{mathai_etal-2010} \\
&=&\frac{1}{t}H_{2,2}^{1,1}\left[ \left. \frac{t^{2}}{x^{\alpha }}%
\right\vert
\begin{array}{cc}
(1,1) & (1,\alpha ) \\
(1,1) & (1,2)%
\end{array}%
\right]  \\
&=&\cite[{\rm formula} \, (1.58)]{mathai_etal-2010} \\
&=&\frac{1}{t}H_{2,2}^{1,1}\left[ \left. \frac{x^{\alpha }}{t^{2}}%
\right\vert
\begin{array}{cc}
(0,1) & (0,2) \\
(0,1) & (0,\alpha )%
\end{array}%
\right]  \\
&=&\frac{1}{t}\frac{1}{2\pi i}\int_{L}\left( \frac{x^{\alpha }}{t^{2}}%
\right) ^{-w}\frac{\Gamma (w)\Gamma (1-w)}{\Gamma (2w)\Gamma (1-\alpha w)}
\, dw
\\
&=&[\text{by the duplication property of the Gamma function}] \\
&=&\frac{2\sqrt{\pi }}{t}\frac{1}{2\pi i}\int_{L}\left( \frac{x^{\alpha }}{%
4t^{2}}\right) ^{-w}\frac{\Gamma (1-w)}{\Gamma (\frac{1}{2}+w)\Gamma
(1-\alpha w)} \, dw \\
&=&\frac{2\sqrt{\pi }}{t}H_{2,1}^{0,1}\left[ \left. \frac{x^{\alpha }}{4t^{2}%
}\right\vert
\begin{array}{cc}
(0,1) & (1/2,1) \\
(0,\alpha ) & \;%
\end{array}%
\right] \,,
\end{eqnarray*}%
where $L$ is the loop beginning and ending at $+\infty $, 
denoted by $L_{+\infty }$ in the treatise
\cite[point ii), p. 3]{mathai_etal-2010}, 
since, in this case, $\mu =\alpha -2>0$.

As a consequence of the previous steps we can write the solution 
$u(x,t)$ as follows
\begin{equation}
u(x,t)=2\sqrt{\pi }e^{-\lambda x}\int_{0}^{t}\phi (t-z)\frac{e^{-\lambda
^{\alpha /2}z}}{z}H_{2,1}^{0,1}\left[ \left. \frac{x^{\alpha }}{4z^{2}}%
\right\vert
\begin{array}{cc}
(0,1) & (1/2,1) \\
(0,\alpha ) & \;%
\end{array}%
\right] \, dz \,.
\end{equation}

\section*{Acknowledgments}
GP is supported by the Basque Government through the 2022--2025 programs 
and by the Ministry of Science, Innovation and Universities: 
BCAM Severo Ochoa accreditation SEV-2017-0718.
The research was carried out under the auspices of 
INDAM-GNFM 
(the National Group of Mathematical Physics of 
the Italian National Institute of High Mathematics).

%\section*{References}
%\bibliography{bioPREM}

\begin{thebibliography}{10}

\bibitem[Angelani (2015)]{luca}
L.~Angelani.
\newblock Run-and-tumble particles, telegrapher's equation and absorption
  problems with partially reflecting boundaries.
\newblock {\em J. Phys. A: Math. Theor.}, 48:495003, 2015.

\bibitem[Angelani and Garra (2020)]{angelani_etal-jpa-2020}
L.~Angelani and R.~Garra.
\newblock On fractional {Cattaneo} equation with partially reflecting
  boundaries.
\newblock {\em J. Phys. A: Math. Theor.}, 53:085204, 2020.

\bibitem[Beghin (2005)]{luisa}
L.~Beghin.
\newblock On fractional tempered stable processes and their governing
  differential equations.
\newblock {\em J. Comput. Phys.}, 293:29--39, 2015.

\bibitem[Carillo and Jordan (2020)]{carillo_etal-mcs-2020}
S.~Carillo and P.~M. Jordan.
\newblock On the propagation of temperature-rate waves and traveling waves in
  rigid conductors of the {Graffi--Franchi--Straughan} type.
\newblock {\em Math. Comput. Simul.}, 176:120--133, 2020.

\bibitem[Compte and Metzler (1997)]{compte_etal-jpa-1997}
A.~Compte and R.~Metzler.
\newblock The generalized {Cattaneo} equation for the description of anomalous
  transport processes.
\newblock {\em J. Phys. A: Math. Gen.}, 30:7277--7289, 1997.

\bibitem[Ding et al. (2018)]{ding_etal-nl-2018}
Z.~Ding, J.~Zhou, B.~Song, V.~Chiloyan, M.~Li, {T.-H.} Liu, and G.~Chen.
\newblock Phonon hydrodynamic heat conduction and {Knudsen} minimum in
  graphite.
\newblock {\em Nano Lett.}, 18:638--649, 2018.

\bibitem[D'Ovidio et al. (2014)]{mirko}
M.~{D'Ovidio}, E.~Orsingher, and B.~Toaldo.
\newblock Time-changed processes governed by space-time fractional telegraph
  equations.
\newblock {\em Stoch. Anal. Appl.}, 32:1009--1045, 2014.

\bibitem[Ferrillo et al. (2018)]{ferrillo_etal-siamjam-2018}
F.~Ferrillo, R.~Spigler, and M.~Concezzi.
\newblock Comparing {Cattaneo} and frcational derivative models for heat
  transfer processes.
\newblock {\em {SIAM} J. Appl. Math.}, 78:1450--1469, 2018.

\bibitem[Gorska et al. (2020)]{gianni}
K.~Gorska, A.~Horzela, E.~K. Lenzi, G.~Pagnini, and T.~Sandev.
\newblock Generalized {Cattaneo} (telegrapher's) equations in modeling
  anomalous diffusion phenomena.
\newblock {\em Phys. Rev. E}, 102:022128, 2020.

\bibitem[Hanyga and Lu (2005)]{hany}
A.~Hanyga and J.~F. Lu.
\newblock Wave field simulation for heterogeneous transversely isotropic porous
  media with the {JKD} dynamic permeability.
\newblock {\em Comput. Mech.}, 36:196--208, 2005.

\bibitem[Hardy (1970)]{hardy-prb-1970}
R.~J. Hardy.
\newblock Phonon {Boltzmann} equation and second sound in solids.
\newblock {\em Phys. Rev. B}, 2:1193--1207, 1970.

\bibitem[Huberman et al. (2019)]{huberman_etal-s-2019}
S.~Huberman, R.~A. Duncan, K.~Chen, B.~Song, V.~Chiloyan, Z.~Ding, A.~A.
  Maznev, G.~Chen, and K.~A. Nelson.
\newblock Observation of second sound in graphite at temperatures above {$100
  \, {\rm K}$}.
\newblock {\em Science}, 364:375--379, 2019.

\bibitem[Joseph and Preziosi (1989)]{Preziosi}
D.~D. Joseph and L.~Preziosi.
\newblock Heat waves.
\newblock {\em Rev. Mod. Phys.}, 61:41--73, 1989.

\bibitem[Klafter et al. (2011)]{klm}
J.~Klafter, {S.-C.} Lim, and R.~Metzler (Editors).
\newblock {\em Fractional Dynamics: Recent Advances}.
\newblock World Scientific, Singapore, 2011.

\bibitem[Lee and Li (2020)]{lee_li-2020}
S.~Lee and X.~Li.
\newblock Hydrodynamic phonon transport: past, present and prospects.
\newblock In B.~Liao, editor, {\em Nanoscale Energy Transport: Emerging
  Phenomena, Methods, and Applications}, chapter~1, pages 1/1--26. Institute of
  Physics Publishing, Bristol, 2020.

\bibitem[Leonenko et al. (2014)] {Leo}
N.~N. Leonenko, M.~M. Meerschaert, R.~L. Schilling, and A.~Sikorskii.
\newblock Correlation structure of time-changed {L\'evy} processes.
\newblock {\em Commun. Appl. Ind. Math.}, 6:e--483, 2014.

\bibitem[Liemert and Kienle (2017)]{liemert_etal-fcaa-2017}
A.~Liemert and A.~Kienle.
\newblock Computational solutions of the tempered fractional wave-diffusion
  equation.
\newblock {\em Fract. Calc. Appl. Anal.}, 20:139--158, 2017.

\bibitem[Lindsay et al. (2019)]{lindsay_etal-jap-2019}
L.~Lindsay, A.~Katre, A.~Cepellotti, and N.~Mingo.
\newblock Perspective on {\it ab initio} phonon thermal transport.
\newblock {\em J. Appl. Phys.}, 126:050902, 2019.

\bibitem[Lischke et al. (2019)]{lischke_etal-fcaa-2019}
A.~Lischke, J.~F. Kelly, and M.~M. Meerschaert.
\newblock Mass-conserving tempered fractional diffusion in a bounded interval.
\newblock {\em Fract. Calc. Appl. Anal.}, 22:1561--1595, 2019.

\bibitem[Mainardi (2010)]{mainardi-2010}
F.~Mainardi.
\newblock {\em Fractional Calculus and Waves in Linear Viscoelasticity}.
\newblock Imperial College Press, London, 2010.
\newblock 2-nd edition in press, World Scientific, Singapore, 2022. 

\bibitem [Masoliver (2021)] {mas}
 J.~Masoliver.
\newblock Telegraphic transport processes and their fractional generalization:
  A review and some extensions.
\newblock {\em Entropy}, 23:364, 2021.

\bibitem[Mathai et al. (2010)]{mathai_etal-2010}
A.M. Mathai, R.~K. Saxena, and H.~J. Haubold.
\newblock {\em The {H}-Function. Theory and Applications}.
\newblock Springer, 2010.

\bibitem[Metzler  and Compte (1999]{metzler_etal-pa-1999}
R.~Metzler and A.~Compte.
\newblock Stochastic foundation of normal and anomalous {Cattaneo-type}
  transport.
\newblock {\em Physica A}, 268:454--468, 1999.

\bibitem[Orsinghrer and Beghin (2004)]{Enzo}
E.~Orsingher and L.~Beghin.
\newblock Time-fractional telegraph equations and telegraph processes with
  {Brownian} time.
\newblock {\em Probab. Theory Relat. Fields}, 128:141--160, 2004.

\bibitem[Simoncelli et al. (2020)]{simoncelli_etal-prx-2020}
M.~Simoncelli, N.~Marzari, and A.~Cepellotti.
\newblock Generalization of {Fourier}'s law into viscous heat equations.
\newblock {\em Phys. Rev. X}, 10:011019, 2020.

\bibitem[Spigler (2020)]{spigler-mmas-2020}
R.~Spigler.
\newblock More around {Cattaneo} equation to describe heat transfer processes.
\newblock {\em Math. Meth. Appl. Sci.}, 43:5953--5962, 2020.

\end{thebibliography}

\end{document}